\newtheorem{theorem}{Theorem}
\newtheorem{corollary}[theorem]{Corollary}
\newtheorem{lemma}[theorem]{Lemma}
\theoremstyle{definition}
\newtheorem{definition}{Definition}
\newcommand{\bx}{\mathbf{x}}
\newcommand{\by}{\mathbf{y}}
\newcommand{\bz}{\mathbf{z}}
\begin{document}
\title{Inglenook Shunting Puzzles}
\author{Simon R. Blackburn\\
Department of Mathematics\\
Royal Holloway University of London\\
Egham, Surrey TW20 0EX, United Kingdom\\
{\tt s.blackburn@rhul.ac.uk}.}
\maketitle

\vspace{-1cm}

\begin{abstract}
An inglenook puzzle is a classic shunting (switching) puzzle often found on model railway layouts. A collection of wagons sits in a fan of sidings with a limited length headshunt (lead track). The aim of the puzzle is to rearrange the wagons into a desired order (often a randomly chosen order). This article answers the question: When can you be sure this can always be done? The problem of finding a solution in a minimum number of moves is also addressed.
\end{abstract}

\section{Introduction}

This paper provides an analysis of when an inglenook puzzle can be solved, and how many moves are needed in the worst case. Most of the paper assumes that the reader has a background in discrete mathematics or computer science, but the first part of this introduction provides a summary of the results of the paper for readers who do not necessarily have this background. The remaining parts of this introduction describes some of the previous academic work on related problems, and describes the structure of the rest of the paper.

\subsection{A non-technical summary}
\label{sub:nontechnical}

Puzzles involving the movement of locomotives, wagons and carriages have a long history, with well-known examples such as Sam Loyd's \emph{Primitive Railroading} puzzle and \emph{The Switch Problem}~\cite{Loyd} and Dudeney's \emph{The Mudville Railway Muddle}~\cite{Dudeney} dating back well over 100 years. Teun Spaans' Just Puzzles website~\cite{justpuzzles} gives an excellent description and brief discussion of these and similar puzzles; Hordern~\cite{Hordern} describes these puzzles in the wider context of sliding piece puzzles. 

Shunting puzzles (where the aim is to shuffle wagons into particular locations) are, unsurprisingly, popular with railway modellers who design them into layouts for operational interest. The two most famous are \emph{Timesaver} (created by John Allen and published in 1972) and \emph{The Inglenook Sidings} (invented by Alan Wright for the Manchester Model Railway Exhibition in 1979). See Adrian Wymann's excellent shunting puzzles website~\cite{wymann} for more information, including the history, construction and operation of such puzzles; also see the inspiring website and books by Carl Arendt~\cite{arendt,Arendt10,Arendt03,Arendt06} for the related concept of a micro layout. 

This paper studies the Inglenook Sidings and some generalisations. A classic Inglenook Sidings puzzle is depicted in Figure~\ref{fig:layout}. We have three \emph{sidings} fanning out from a single track, the \emph{headshunt}. There are eight wagons, all of the same length, distributed in the sidings. The first two sidings are each long enough for three wagons, and the final siding is long enough for five. The headshunt is long enough for three wagons, plus a shunting engine. Five of the wagons are selected at random (by drawing counters one at a time from a bag, say). The aim is to shunt the wagons so that the five selected wagons lie in the long siding, in the correct order, and the remaining three wagons lie in one of the two remaining sidings. A variation also specifies the order of the three wagons in the shorter siding. We assume that wagons are always left in sidings when not being shunted: they are never parked across points (switches).

\begin{figure}
\[
\begin{array}{c}
\includegraphics[width=125mm]{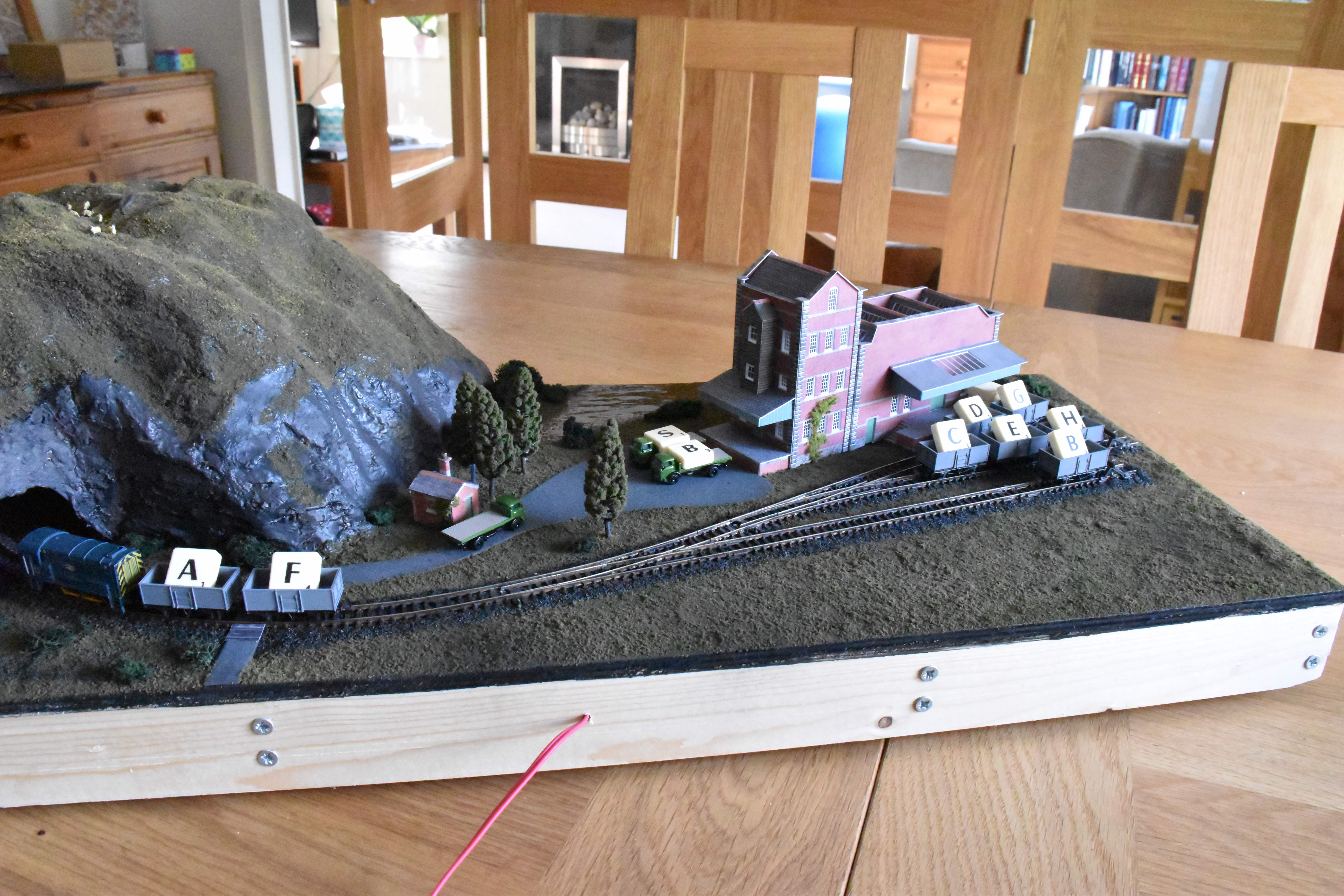}\\
\includegraphics[width=125mm]{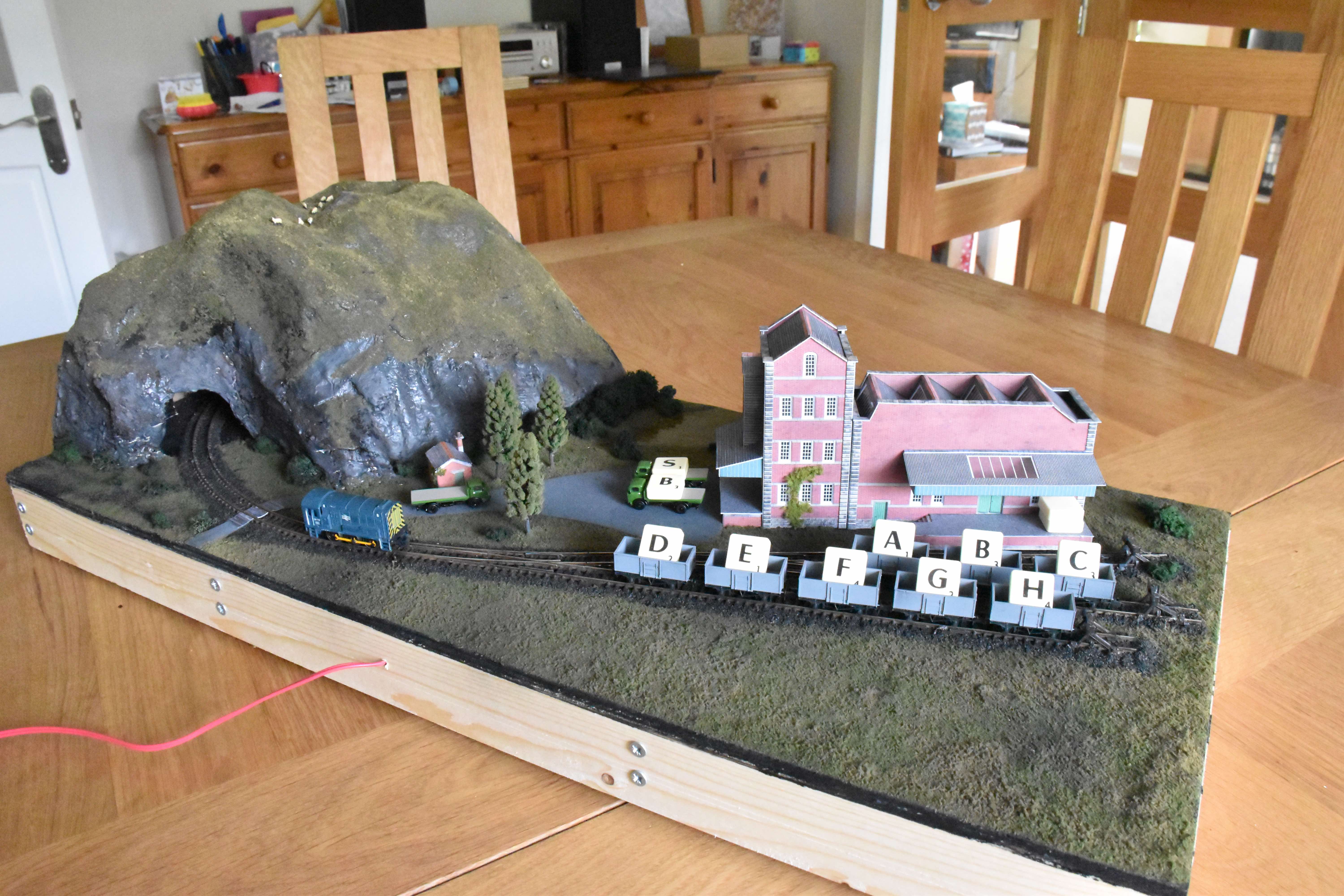}
\end{array}
\]
\caption{An Inglenook Sidings puzzle. Pictured with a long way to go, and then completed.}
\label{fig:layout}
\end{figure}

Experience tells us that the classic Inglenook Sidings puzzle can always be solved, whatever the initial distribution of wagons. (Not all puzzles have this nice property: try solving a `puzzle tray' sliding block puzzle after swapping two adjacent tiles!) In fact, the classic puzzle is small enough so that a computer can be programmed to solve the puzzle in all situations, and so we can be sure that the puzzle can always be solved.

So what happens if the sidings on my layout do not have the lengths of the classic puzzle? For example:
\begin{itemize}
\item[(a)] If I have sidings that can contain $4$, $5$ and $6$ wagons and a headshunt that contains at most 4 wagons, what is the largest number of wagons that works?
\item[(b)] Suppose I have 10 wagons and a headshunt that can contain $3$ wagons. What are the shortest siding lengths I can choose?
\end{itemize}
This paper shows how to answer these kinds of questions. (Answers to the questions above: (a) 12 wagons. (b) You must have a siding of length at least $5$. Sidings of length $4$, $4$ and $5$ work. Sidings of length $3$, $5$ and $5$ also work.) The remainder of this introduction gives an intuition as to what is going on. The sections that follow give a formal proof of the results (using terminology from mathematics/computer science).

More generally we have $w$ wagons, a headshunt that can contain at most~$h$ wagons, and sidings that can contain at most $m_1$, $m_2$ and $m_3$ wagons respectively. (The classical Inglenook Sidings puzzle will have $w=8$, $h=m_1=m_2=3$, and $m_3=5$.) Let $M$ be the maximum value of $h-1$, $m_1$, $m_2$ and $m_3$.  We will show that the puzzle can always be solved if and only if
\begin{equation}
\label{eqn:3_sidings}
(h-1)+m_1+m_2+m_3\geq w+M.
\end{equation}
So, for example, with the classic Inglenook track lengths, we have $M=5$ and so~\eqref{eqn:3_sidings} tells us that we need $2+3+3+5\geq w+5$: we can have at most $8$ wagons.

What is the intuition here? 
\begin{itemize}
\item If~\eqref{eqn:3_sidings} is not satisfied, then we can never move the last wagon in the longest siding (or the first wagon in the headshunt if $M=h-1$) to a different siding, because by the time we have removed enough wagons for us to move it, all other sidings are too full. For example,  suppose we have $9$ wagons with classic Inglenook track lengths. To move the last wagon in the longest siding, we can store at most $5$ wagons in the shorter sidings (we need a space for the last wagon to move to) and we can store at most $2$ wagons in the headshunt (we need a space to pick up the wagon). Adding the last wagon in the longest siding makes $8$ wagons in all: there is no good place for that 9$^\text{th}$ wagon. 
\item If~\eqref{eqn:3_sidings} \emph{is} satisfied, then we can move all the wagons. Suppose we have a wagon (say, a banana van) that we want at the end of the longest siding. To solve the shunting problem, we divide the problem into three phases, which roughly go as follows. In Phase~1, we move the banana van to the near-end of a shorter siding. In Phase~2 we move the last wagon from the longest siding and replace it with the banana van. In Phase~3, we move the remaining wagons into the correct places. In Phase~3, we make sure to never move the last wagon in the longest siding. So this phase is essentially a smaller inglenook problem (one less wagon; one siding shortened), which will be an easier puzzle to solve. The proof in the later sections shows all three phases are possible.
\end{itemize}

Once we know that a puzzle can be solved, the next question to ask is: What is the minimum number of shunting moves that are needed to solve a puzzle? For the classic Inglenook Sidings, and starting from a position where the eight wagons all lie in two sidings, there are starting positions that require $17$ moves to solve (if we only care about the ordering of the five wagons chosen). Here, a `move' consists of two movements of the shunting engine: into a siding to pick up or drop off wagons, and then back to the headshunt. See Figure~\ref{fig:inglenook17} for an example with a $17$-move solution, where we are asked to move the wagons so that Wagons $4$ to $8$ lie in the long siding in numerical order and the remaining wagons fill Siding~2 (in any order). In this figure, the wagons in the headshunt are listed from shunting engine to points, and the wagons in each siding are listed from points to buffer stop; each dash indicates a free space. In fact, if we are allowed to start from a position that includes wagons in the headshunt, there are starting positions that need $18$ moves to solve: see Figure~\ref{fig:inglenook18}. If we also require the wagons in Siding 2 to be in numerical order, there are some starting points that require $20$ moves; Figure~\ref{fig:inglenook20} gives an example.

\begin{figure}
{\footnotesize
\[
\begin{array}{c|c|c|cc}
\text{Headshunt}&\text{Siding 1}&\text{Siding 2}&\text{Siding 3}\\\cline{1-4}
\text{ - - - } & \text{ - - - } & \text{ 4 7 8 } & \text{ 1 6 2 3 5  } &\text{(Start)}\\
\text{ 1 6 - } & \text{ - - - } & \text{ 4 7 8 } & \text{ - - 2 3 5  } \\
\text{ 1 6 4 } & \text{ - - - } & \text{ - 7 8 } & \text{ - - 2 3 5  } \\
\text{ - - - } & \text{ 1 6 4 } & \text{ - 7 8 } & \text{ - - 2 3 5  } \\
\text{ 7 8 - } & \text{ 1 6 4 } & \text{ - - - } & \text{ - - 2 3 5  } \\
\text{ 7 8 2 } & \text{ 1 6 4 } & \text{ - - - } & \text{ - - - 3 5  } \\
\text{ 7 8 - } & \text{ 1 6 4 } & \text{ - - 2 } & \text{ - - - 3 5  } \\
\text{ 7 8 3 } & \text{ 1 6 4 } & \text{ - - 2 } & \text{ - - - - 5  } \\
\text{ 7 8 - } & \text{ 1 6 4 } & \text{ - 3 2 } & \text{ - - - - 5  } \\
\text{ 7 8 5 } & \text{ 1 6 4 } & \text{ - 3 2 } & \text{ - - - - -  } \\
\text{ 7 8 - } & \text{ 1 6 4 } & \text{ 5 3 2 } & \text{ - - - - -  } \\
\text{ - - - } & \text{ 1 6 4 } & \text{ 5 3 2 } & \text{ - - - 7 8  } \\
\text{ 1 6 - } & \text{ - - 4 } & \text{ 5 3 2 } & \text{ - - - 7 8  } \\
\text{ 1 - - } & \text{ - - 4 } & \text{ 5 3 2 } & \text{ - - 6 7 8  } \\
\text{ 1 4 - } & \text{ - - - } & \text{ 5 3 2 } & \text{ - - 6 7 8  } \\
\text{ 1 4 5 } & \text{ - - - } & \text{ - 3 2 } & \text{ - - 6 7 8  } \\
\text{ 1 - - } & \text{ - - - } & \text{ - 3 2 } & \text{ 4 5 6 7 8  } \\
\text{ - - - } & \text{ - - - } & \text{ 1 3 2 } & \text{ 4 5 6 7 8  } &\text{(Finish)}
\end{array}
\]}
\caption{A $17$-move Inglenook Sidings solution}
\label{fig:inglenook17}
\end{figure}

\begin{figure}
{\footnotesize
\[
\begin{array}{c|c|c|cc}
\text{Headshunt}&\text{Siding 1}&\text{Siding 2}&\text{Siding 3}\\\cline{1-4}
\text{ 1 6 - } & \text{ - 4 7 } & \text{ 3 5 8 } & \text{ - - - - 2  } &\text{(Start)}\\
\text{ 1 6 4 } & \text{ - - 7 } & \text{ 3 5 8 } & \text{ - - - - 2  } \\
\text{ 1 6 - } & \text{ - - 7 } & \text{ 3 5 8 } & \text{ - - - 4 2  } \\
\text{ 1 6 7 } & \text{ - - - } & \text{ 3 5 8 } & \text{ - - - 4 2  } \\
\text{ 1 - - } & \text{ - - - } & \text{ 3 5 8 } & \text{ - 6 7 4 2  } \\
\text{ 1 3 5 } & \text{ - - - } & \text{ - - 8 } & \text{ - 6 7 4 2  } \\
\text{ - - - } & \text{ 1 3 5 } & \text{ - - 8 } & \text{ - 6 7 4 2  } \\
\text{ 8 - - } & \text{ 1 3 5 } & \text{ - - - } & \text{ - 6 7 4 2  } \\
\text{ 8 1 3 } & \text{ - - 5 } & \text{ - - - } & \text{ - 6 7 4 2  } \\
\text{ - - - } & \text{ - - 5 } & \text{ 8 1 3 } & \text{ - 6 7 4 2  } \\
\text{ 6 7 4 } & \text{ - - 5 } & \text{ 8 1 3 } & \text{ - - - - 2  } \\
\text{ 6 7 - } & \text{ - 4 5 } & \text{ 8 1 3 } & \text{ - - - - 2  } \\
\text{ 6 7 2 } & \text{ - 4 5 } & \text{ 8 1 3 } & \text{ - - - - -  } \\
\text{ 6 7 - } & \text{ 2 4 5 } & \text{ 8 1 3 } & \text{ - - - - -  } \\
\text{ 6 7 8 } & \text{ 2 4 5 } & \text{ - 1 3 } & \text{ - - - - -  } \\
\text{ - - - } & \text{ 2 4 5 } & \text{ - 1 3 } & \text{ - - 6 7 8  } \\
\text{ 2 4 5 } & \text{ - - - } & \text{ - 1 3 } & \text{ - - 6 7 8  } \\
\text{ 2 - - } & \text{ - - - } & \text{ - 1 3 } & \text{ 4 5 6 7 8  } \\
\text{ - - - } & \text{ - - - } & \text{ 2 1 3 } & \text{ 4 5 6 7 8  }&\text{(Finish)}
\end{array}
\]}
\caption{An $18$-move Inglenook Sidings solution}
\label{fig:inglenook18}
\end{figure}

\begin{figure}
{\footnotesize
\[
\begin{array}{c|c|c|cc}
\text{Headshunt}&\text{Siding 1}&\text{Siding 2}&\text{Siding 3}\\\cline{1-4}
\text{ - - - } & \text{ - - - } & \text{ 6 1 8 } & \text{ 5 4 7 2 3  } &\text{(Start)}\\
\text{ 6 1 - } & \text{ - - - } & \text{ - - 8 } & \text{ 5 4 7 2 3  } \\
\text{ - - - } & \text{ - 6 1 } & \text{ - - 8 } & \text{ 5 4 7 2 3  } \\
\text{ 8 - - } & \text{ - 6 1 } & \text{ - - - } & \text{ 5 4 7 2 3  } \\
\text{ 8 5 - } & \text{ - 6 1 } & \text{ - - - } & \text{ - 4 7 2 3  } \\
\text{ 8 5 6 } & \text{ - - 1 } & \text{ - - - } & \text{ - 4 7 2 3  } \\
\text{ - - - } & \text{ - - 1 } & \text{ 8 5 6 } & \text{ - 4 7 2 3  } \\
\text{ 4 7 2 } & \text{ - - 1 } & \text{ 8 5 6 } & \text{ - - - - 3  } \\
\text{ 4 7 - } & \text{ - 2 1 } & \text{ 8 5 6 } & \text{ - - - - 3  } \\
\text{ 4 7 3 } & \text{ - 2 1 } & \text{ 8 5 6 } & \text{ - - - - -  } \\
\text{ 4 7 - } & \text{ 3 2 1 } & \text{ 8 5 6 } & \text{ - - - - -  } \\
\text{ 4 7 8 } & \text{ 3 2 1 } & \text{ - 5 6 } & \text{ - - - - -  } \\
\text{ 4 - - } & \text{ 3 2 1 } & \text{ - 5 6 } & \text{ - - - 7 8  } \\
\text{ 4 5 6 } & \text{ 3 2 1 } & \text{ - - - } & \text{ - - - 7 8  } \\
\text{ - - - } & \text{ 3 2 1 } & \text{ - - - } & \text{ 4 5 6 7 8  } \\
\text{ 3 - - } & \text{ - 2 1 } & \text{ - - - } & \text{ 4 5 6 7 8  } \\
\text{ - - - } & \text{ - 2 1 } & \text{ - - 3 } & \text{ 4 5 6 7 8  } \\
\text{ 2 - - } & \text{ - - 1 } & \text{ - - 3 } & \text{ 4 5 6 7 8  } \\
\text{ - - - } & \text{ - - 1 } & \text{ - 2 3 } & \text{ 4 5 6 7 8  } \\
\text{ 1 - - } & \text{ - - - } & \text{ - 2 3 } & \text{ 4 5 6 7 8  } \\
\text{ - - - } & \text{ - - - } & \text{ 1 2 3 } & \text{ 4 5 6 7 8  }&\text{(Finish)}
\end{array}
\]}
\caption{A $20$-move Inglenook Sidings solution, with all wagons ordered}
\label{fig:inglenook20}
\end{figure}

\subsection{Related literature}

The academic study of the rearrangement of rail wagons goes back at least as far as 1968: in his \emph{Art of Computer Programming}, Donald Knuth~\cite{Knuth68} describes Dijkstra's idea of visualising stack-sorting in terms of reordering wagons. Knuth's characterisation of those permutations that can be sorted using a single stack has been highly influential, in particular leading to the theory of permutation patterns (see, for example, Kitaev~\cite{Kitaev}) and to the literature on sorting using queues and stacks (see, for example, B\'ona~\cite{Bona}).

There is a significant literature on the sorting of wagons; see~\cite{DiStefanoMaue,GattoMaue,HansmannZimmermann} for surveys. In contrast to this paper, the literature often formalises sidings as stacks (or replaces sidings by loops that can be accessed from both ends, modelled as double ended queues), and so places no limit to the number of wagons that can be moved into a siding. (A paper by Atkinson, Livesey and Tulley~\cite{AtkinsonLivesey} is one significant exception, that considers (in particular) sorting using bounded stacks.) Also, the most common definition of a move is different to that considered in this paper: a move consists of taking the wagons in a fixed siding, drawing them into the headshunt, and then distributing them one at a time (in order, from points to shunting engine) into the sidings. This definition of a move is motivated by \emph{hump yards}, where there is a low artificial hill, or hump, between the shunting engine and the points. Arriving wagons are pushed just beyond this hill, and then individually roll under gravity into their chosen sidings. To sort wagons, the wagons in a siding are drawn back over the hump, and this process is repeated. Jacob, M\'arton, Maue and Nunkesser~\cite{JacobMarton} consider the algorithmic complexity of sorting with this formalisation of a hump yard, proving hardness and approximability results for the problem of finding the optimal number of moves required; they consider the case when the length of a siding is bounded, as well as the case where the number of sidings is limited. Dhalhaus, Horak, Miller and Ryan~\cite{DahlhausHorak} provide results on the number of sidings needed to sort a train in a related model, answering a question of Knuth. Finally, Di Stefano and Ko\v ci~\cite{DiStefanoKoci} consider a model motivated by the problem of filling a tram depot at night so that trams may leave the following morning with minimal shunting. Here, the sidings are bounded in number and length, but the moves are different in nature. 

\subsection{Structure of the paper}

The remainder of this paper is structured as follows. In Section~\ref{sec:statements} we provide more precise definitions for the puzzles we consider, and we state our results precisely. We look at a more general situation than that described in Subsection~\ref{sub:nontechnical} above, by not restricting ourselves to the case of three sidings. We also define a simpler (and much duller) puzzle played with piles of cards, and show that the inglenook puzzle can always be solved when the same is true for this card puzzle. In Section~\ref{sec:card}, we establish exactly when this card puzzle can always be solved (Theorem~\ref{thm:card}), and this will give the result (Theorem~\ref{thm:inglenook}) on the solubility of inglenook puzzles we are aiming for. Finally, in Section~\ref{sec:numberofmoves} we provide a result (Theorem~\ref{thm:inglenook_moves}) on the number of moves needed to solve an inglenook puzzle.

\section{Problem statement and a simpler puzzle}
\label{sec:statements}

Let $w$, $s$, $h$, $m_1,m_2,\ldots,m_s$ be positive integers with $w< h+m_1+m_2+\cdots+m_s$. We formally define an inglenook puzzle  as follows. We have $s$ sidings of lengths $m_1,m_2,\ldots,m_s$, with headshunt length $h$ and $w$ wagons. Let $W$ be the set of wagons (so $|W|=w)$.
\begin{definition}
A \emph{position} $p$ is a sequence $p=(W_0,W_1,\ldots,W_s)$ of ordered subsets of wagons such that 
\begin{enumerate}
\item $\bigcup_{i=0}^s W_i=W$ as sets,
\item $W_i$ and $W_j$ are disjoint whenever $i\not=j$,
\item $|W_i|\leq m_i$ for $1\leq i\leq s$, and
\item $|W_0|\leq h$.
\end{enumerate}
\end{definition}
We think of $W_0$ as the subset of wagons in the headshunt, ordered \emph{from shunting engine to points}; and for $i>1$, $W_i$ as the subset of wagons in Siding~$i$, ordered \emph{from points to buffer stop}. The above conditions just say: all wagons are in a siding or the headshunt; no wagon is in two sidings, or both a siding and the headshunt; there are no more than $m_i$ wagons in Siding~$i$; there are no more than $h$ wagons in the headshunt. In Figure~\ref{fig:inglenook17}, the position after one move is $p=((1,6),\varepsilon,(4,7,8),(2,3,5))$, where $\varepsilon$ is the empty ordered set.

\begin{definition}
A \emph{move} is a pair $(p_1,p_2)$ of distinct positions
\[
p_1=(W_0,W_1,\ldots,W_s)\text{ and }p_2=(W'_0,W'_1,\ldots,W'_s)
\]
such that for some $r\in\{1,2,\ldots,s\}$:
\begin{enumerate}
\item $W_i=W'_i$ for $1\leq i\leq s$ with $i\not=r$, and
\item the concatenation of $W_0$ and $W_r$ is equal to the concatenation of $W'_0$ and $W'_r$.
\end{enumerate}
\end{definition}
So a move transfers $||W_0|-|W'_0||$ wagons between the headshunt and Siding~$r$. If $|W_0|-|W'_0|>0$ then wagons are transferred to the headshunt, otherwise the wagons are transferred to the siding.


\begin{definition}
The \emph{inglenook graph} $\Gamma(w,s,h,m_1,m_2,\ldots,m_s)$ is a graph with vertex set $V$ and edge set $E$, where $V$ is the set of positions and $E$ is the set of moves defined above.
\end{definition}

When $(p_1,p_2)$ is a move, so is $(p_2,p_1)$. So $\Gamma(w,s,h,m_1,m_2,\ldots,m_s)$ is an undirected graph. 

An inglenook puzzle is played on the inglenook graph $(V,E)$. A set $S\subset V$ of \emph{starting positions} and a set $F\subseteq V$ of \emph{finishing positions} is specified. We begin at a vertex $p\in S$, and solve the puzzle by finding a path in the graph to any vertex in $E$. The puzzle can \emph{always be solved} if and only if every $p\in S$ is contains a finishing position in its connected component (so from every starting position, there is a path in the inglenook graph to a finishing position. The classic Inglenook Sidings puzzle takes $S$ to be the set of positions with the headshunt and Siding~1 empty, with Sidings~2 and~3 containing $3$ and $5$ wagons respectively; the subset $F$ is chosen to be the subset of $S$ where (for example) wagons $4$ to $8$ appear in order in Siding~3. The common variant of the inglenook puzzle also mandates an order for the wagons in Siding~2 (wagons~1 to~3 appearing in order, for example): $F$ contains just one position in this case. We write $I(w,s,h,m_1,m_2,\ldots,m_s;S,F)$ for this puzzle.

We make two assumptions on the sets $S$ and $F$. First, we assume that if $p\in S$ is a starting position, then the position $p'$ obtained from $p$ by permuting the wagons amongst themselves in an arbitrary way is also a starting position (so $S$ is closed under permuting wagons). Second, we assume that for some $z\in\{1,2,\ldots,s\}$, all the finishing positions $F$ have the same wagon, say wagon $\omega\in W$, as the last wagon (nearest the buffer stop) in Siding~$z$. This condition holds, for example, when the ordered subset of wagons in a particular siding are mandated. The assumptions for $S$ and $F$ hold for both the classic Inglenook Sidings puzzle and the variant when the final position of all wagons is specified; if these assumptions hold, we say that $F$ and $S$ are \emph{natural} and that $I(w,s,h,m_1,m_2,\ldots,m_s;S,F)$ is a \emph{natural} inglenook puzzle. The following theorem characterises when natural inglenook puzzles can always be solved:

\begin{theorem}
\label{thm:inglenook}
Let $w$, $s$, $h$, $m_1,m_2,\ldots,m_s$ be positive integers with $w< h+m_1+m_2+\cdots+m_s$. Let $S$ and $F$ be natural starting and finishing sets (see above). The natural inglenook puzzle $I(w,s,h,m_1,m_2,\ldots,m_s;S,F)$ (with $s$ sidings of lengths $m_1,m_2,\ldots,m_s$, with headshunt length $h$, with $w$ wagons, starting positions $S$ and finishing positions $F$) can always be solved when the inequalities $w>1$, $s>1$, $h>1$ and
\begin{equation}
\label{eqn:main_condition}
h-1+m_1+m_2+\cdots+m_s\geq w+\max\{h-1,m_1,m_2,\ldots,m_s\}
\end{equation}
all hold. The inglenook puzzle can always be solved when the conditions $w>1$, $s>2$, $h=1$ and~\eqref{eqn:main_condition} all hold. The inglenook puzzle can also always be solved when $w=1$. In all other cases, the puzzle cannot always be solved.
\end{theorem}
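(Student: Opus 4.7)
The plan is to prove the theorem by separating the iff into a sufficiency direction and a necessity direction, using the card-puzzle reduction of Section~\ref{sec:statements} together with Theorem~\ref{thm:card} for the former, and explicit obstructions for the latter.

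For sufficiency, I would dispense with $w=1$ immediately: the inglenook graph is then trivially connected and any natural $F$ meets every component. For $w>1$ together with the side conditions $s>1,h>1$ (or $s>2,h=1$), condition~\eqref{eqn:main_condition} is precisely the solvability hypothesis of Theorem~\ref{thm:card}, so the associated card puzzle can always be solved; the reduction constructed in Section~\ref{sec:statements} then lifts any card-puzzle solution to a solution of the inglenook puzzle. The side conditions on $s$ and $h$ are what give the reduction the staging room (an auxiliary siding or an extra headshunt slot) that it requires.

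For necessity I would treat three failure regimes in turn. (i) If $w>1$ and $s=1$: every move preserves the concatenation $W_0\cdot W_1$, so two starting positions in $S$ (which by naturality is closed under permuting wagons) with different concatenations lie in different connected components; at most one such component meets any natural $F$, so not every starting position can reach $F$. (ii) If $w>1$, $s=2$ and $h=1$: each move transfers exactly one wagon, and an explicit small case (take $w=2$, $m_1=m_2=2$; the state $(\varepsilon,(b,a),\varepsilon)$ cannot reach $(\varepsilon,\varepsilon,(b,a))$, while the permutation-closure of any starting set contains both $(\varepsilon,(a,b),\varepsilon)$ and $(\varepsilon,(b,a),\varepsilon)$) exhibits a natural puzzle that fails to be always solvable. (iii) If~\eqref{eqn:main_condition} fails: I would apply the obstruction of Subsection~\ref{sub:nontechnical}. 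Pick a siding $k$ with $m_k=M$ and place some wagon $\omega'\neq\omega$ at the buffer end of siding~$k$; since the finishing position has $\omega$ (not $\omega'$) at the buffer end of~$z$, this $\omega'$ must move. The capacity count of the introduction---at most $h-1$ wagons in the headshunt (a slot is needed for $\omega'$), at most $\sum_{i\neq k}m_i-1$ in the other sidings (a slot is needed for $\omega'$ to land in), plus $\omega'$ itself---gives $w\leq (h-1)+\sum_{i\neq k}m_i=(h-1)+\sum_i m_i-m_k$, contradicting the failure of~\eqref{eqn:main_condition}. When $M=h-1$ instead, naturality of $S$ allows a starting position with $\omega'\neq\omega$ at the engine end of the headshunt, and a dual capacity count traps $\omega'$ there.

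The main obstacle I foresee is making regime (iii) fully rigorous: the counting must rule out every move sequence, not just the naive one, by identifying a single critical state at which the capacity deficit manifests. Concretely, one argues that in any sequence that successfully moves $\omega'$ out of siding~$k$ and into a different siding, the state immediately after $\omega'$ is popped into the headshunt has the other $w-1$ wagons filling the remaining headshunt and non-$k$ sidings to capacity; the would-be destination siding is then full and the headshunt has no room to pop from it, so the only available move is to push $\omega'$ back into~$k$. Once this is established the obstruction closes, and the $M=h-1$ case follows by an essentially identical bookkeeping argument with the roles of siding~$k$ and the headshunt interchanged.
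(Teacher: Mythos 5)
Your overall architecture (reduce sufficiency to the Cards in Piles puzzle via the convertible-position correspondence, prove necessity by invariants and capacity counts) is the same as the paper's, and your sufficiency sketch and regime (i) are fine. The problems are in regimes (ii) and (iii). In (ii) you exhibit a single failing instance with $w=2$, $m_1=m_2=2$, but the theorem asserts failure for \emph{every} $w>1$, every $m_1,m_2$ and every natural $S,F$ with $s=2$, $h=1$; one example proves nothing about, say, $w=7$, $m_1=m_2=4$. The general argument you need (and the one the paper uses) is the invariant you almost state: when $h=1$ every move shifts a single wagon across one boundary, so the linear order obtained by reading Siding~1 from buffer stop to points, then the headshunt, then Siding~2 from points to buffer stop is preserved by every move; naturality of $S$ then supplies a starting position in which $\omega$ is not at the required end of this order.

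In regime (iii) you trap the wrong wagon. If the maximal siding $k$ is not the distinguished siding $z$ (or if $M=h-1$, so the trap is the engine end of the headshunt), then a starting position with some $\omega'\neq\omega$ stuck at the buffer end of Siding~$k$ (or at the engine end of the headshunt) need not be unsolvable: naturality of $F$ constrains only the last wagon of Siding~$z$, so there may well be finishing positions with $\omega'$ sitting exactly where it is trapped, and your sentence ``this $\omega'$ must move'' is simply false in that case. The correct choice, as in the paper, is to trap $\omega$ itself: start with $\omega$ at the buffer end of Siding~$k$ when $k\neq z$ (or at the engine end of the headshunt when $M=h-1$); since $\omega$ can never leave that location it can never become the last wagon of Siding~$z$, so no finishing position is reachable. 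Only when $k=z$ does your choice of $\omega'\neq\omega$ work. Your closing discussion of the critical state (after $\omega'$ is popped into the headshunt everything else is at capacity, so the only available move returns it to the buffer end of Siding~$k$) is the right way to make the trapping rigorous in the equality case $h+\sum_{i\neq k}m_i=w$ and matches the paper; it is the choice of which wagon to trap, not the counting, that needs repair.
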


We note that most of these conditions only come into play in extreme cases (fewer than three sidings; a very small headshunt; a very small number of wagons). Away from these cases, the inequality~\eqref{eqn:main_condition} is the only condition that needs to be satisfied.
We prove this theorem by analysing a simpler Cards in Piles puzzle (simpler because only one object is moved at once, and because the headshunt and sidings are not distinguished). The puzzle is played using a set $W$ of $w$ cards labelled with different symbols (maybe a picture of a wagon!), and we define it as follows. Let $w$, $s$ and $m_0,m_1,\ldots,m_s$ be positive integers such that $w\leq \sum_{i=0}^sm_i$.

\begin{definition}
A position, or \emph{state}, of the puzzle is a distribution of the $w$ cards into $s+1$ piles, with (for $i\in\{0,1,2,\ldots,s\}$) Pile~$i$  containing between $0$ and $m_i$ cards. States with cards in a different order in a pile are thought of as different. (More formally, a state is a sequence $(P_0,P_1,\ldots,P_s)$ of ordered subsets of $W$, where the subsets $P_i$ partition $W$ and $|P_i|\leq m_i$ for all $i$.)
\end{definition}
\begin{definition}
A \emph{move} in the puzzle consists of picking one card from the top of a pile, and placing it on the top of another pile. It is forbidden for Pile~$i$ to ever contain more than $m_i$ cards, so each move results in another state. (Formally, a move is a pair $(p_1,p_2)$ of states, where $p_1$ is the state before the card is moved and $p_2$ is the state that results after the card is moved.)
\end{definition}
\begin{definition}
The \emph{Cards in Piles graph} $G(w,s,m_0,\ldots,m_s)$ is the (undirected) graph with vertex set $V$ equal to the set of states, and edge set $E$ equal to the set of moves. 
\end{definition}
Define the set $S$ of starting states by $S=V$ (so we start in any state).  Define the set $F\subseteq V$ of finishing states to be any subset of size $1$. The puzzle begins with the cards in a starting state. The aim of the puzzle is to reach the finishing state. We write $C(w,s,m_0,\ldots,m_s;F)$ for this puzzle. We see that $C(w,s,m_0,\ldots,m_s;F)$ can always be solved if and only if the Cards in Piles graph $G(w,s,m_0,\ldots,m_s)$ is connected. There is a simple condition that tells us when this happens:

\begin{lemma}
\label{lem:connected}
The Cards in Piles graph $G(w,s,m_0,\ldots,m_s)$ is connected when $w=1$. When $w>1$ and $s=1$ the graph is disconnected. When $w>1$ and $s>1$, the graph is connected if and only if
\[
\sum_{i=0}^s m_i\geq w+\max\{m_i:0\leq i\leq s\}.
\]
\end{lemma}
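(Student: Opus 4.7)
Let $M=\max_{i}m_i$ and fix an index $j$ with $m_j=M$. The case $w=1$ is immediate: the sole card can always be moved to any pile. For $w>1$ and $s=1$, the concatenation of pile~$0$ read from bottom to top with pile~$1$ read from top to bottom is a word unchanged by every legal move, so there are at least two connected components. I therefore restrict attention to $w>1$ and $s>1$.

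For necessity, suppose $\sum_i m_i<w+M$, equivalently $\sum_{i\neq j}m_i\le w-1$. Then pile~$j$ contains at least one card in every state, and for the bottom card of pile~$j$ ever to move elsewhere there must be an instant at which all $w$ cards sit in piles other than pile~$j$; this demands $\sum_{i\neq j}m_i\ge w$, contradicting the hypothesis. Hence the identity of the bottom card of pile~$j$ is invariant along every path in the graph, and since $w>1$ there are states whose bottom cards of pile~$j$ differ, so the graph is disconnected.

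For sufficiency, assume $\sum_i m_i\ge w+M$ and proceed by induction on $w$. The base case $w=1$ is trivial. For $w\ge 2$, fix any card $c$ and adopt the sub-goal of driving the current state to one in which $c$ sits at the bottom of pile~$j$. Once this sub-claim is established, removing $c$ and shrinking $m_j$ by $1$ yields a smaller Cards in Piles problem whose parameters still satisfy the hypothesis, since $\sum m'_i-w'=\sum m_i-w\ge M\ge M'$; the inductive hypothesis then supplies a path between any two canonical states sharing $c$ at the bottom of pile~$j$, so the sub-claim applied at both endpoints yields a path in the original graph. The edge case $M=1$, in which every $m_i=1$ and the hypothesis supplies at least one spare empty pile, is handled separately by a direct cyclic-permutation argument that uses the spare to rotate tokens between piles.

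The main technical obstacle is the sub-claim. When $c$ already lies in pile~$j$, the plan is to evacuate the cards above $c$ to other piles (feasible because $\sum_{i\neq j}m_i\ge w$), move $c$ temporarily out of pile~$j$, empty the remaining contents of pile~$j$, and finally return $c$ to the now-empty pile~$j$. When $c$ lies in some pile $k\neq j$, I plan to interleave two evacuations: the contents of pile~$j$ are dispersed into the other piles (possibly with some cards landing on top of $c$ in pile~$k$), and the cards above $c$ in pile~$k$ are dispersed similarly (possibly with some landing in pile~$j$). The slack $\sum_{i\neq j}m_i\ge w$ guarantees that a legal destination exists at each individual step; a careful accounting shows that alternating passes strictly reduce a natural measure --- roughly, the height of pile~$j$ plus the number of cards between $c$ and the top of its pile --- so the process eventually reaches a state in which pile~$j$ is empty and $c$ is at the top of pile~$k$, from which $c$ is moved across to land at the bottom of pile~$j$. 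Making this monotonicity argument precise within the capacity bookkeeping is the crux of the proof.
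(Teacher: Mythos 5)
Your overall architecture is the same as the paper's: the $w=1$ and $s=1$ cases, the invariance of the bottom card of a largest pile for necessity, and for sufficiency an induction on $w$ that pins a chosen card to the bottom of a designated pile and then plays the $(w-1)$-card puzzle with that pile's capacity reduced by one. The necessity direction and the bookkeeping for the inductive reduction ($\sum m'_i - w' = \sum m_i - w \ge M \ge M'$) are correct, as is segregating the all-capacities-one case.

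However, the sub-claim --- that from an arbitrary state you can reach one with $c$ at the bottom of pile $j$ --- is exactly where the real work lies, and your proposal does not close it; indeed you say so yourself (``the crux''). Two concrete problems. First, in the case where $c$ already lies in pile $j$, after you park $c$ elsewhere and begin emptying the rest of pile $j$, the only remaining space may be on top of $c$, so $c$ gets buried and can no longer be returned to the emptied pile; your plan does not guard against this. Second, the potential you propose (height of pile $j$ plus the number of cards above $c$) is \emph{not} strictly decreased by the moves you allow: shifting a card from pile $j$ onto pile $k$ above $c$, or from above $c$ into pile $j$, leaves it unchanged, so the ``alternating passes'' could in principle cycle. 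The paper's resolution is a different device: using total space at least $2$ (available whenever some $m_i\ge 2$), first arrange that two piles $u$ and $v$ other than the designated pile each have nonzero space, clear the cards above $b$ while preserving that property, then keep $b$ on top of a pile throughout the final evacuation by hopping it between $u$ and $v$ whenever one of them is about to be filled, and only then drop $b$ into the emptied pile. The capacity estimate $\sum_{i\ne \ell}(m_i-w_i)\ge w_\ell$ (inequality \eqref{eqn:space} in the paper) is what makes each of these stages feasible. Without this (or some equivalent mechanism that certifies termination and prevents $c$ from being buried), your argument is a plan rather than a proof.
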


We will prove Lemma~\ref{lem:connected} in the next section. The following theorem immediately follows from Lemma~\ref{lem:connected} and the discussion above:

\begin{theorem}
\label{thm:card}
The Cards in Piles puzzle $C(w,s,m_0,\ldots,m_s;F)$ can always be solved when $w=1$. When $w>1$ the puzzle can always be solved except in the following two situations:
\begin{enumerate}
\item when $s=1$;
\item when $\sum_{i=0}^s m_i<w+\max\{m_i:0\leq i\leq s\}$.
\end{enumerate}
\end{theorem}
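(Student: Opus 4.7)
The plan is to derive Theorem~\ref{thm:card} as an immediate consequence of Lemma~\ref{lem:connected}, which may be assumed. The key reduction was already spelled out in the discussion preceding the theorem: in the puzzle $C(w,s,m_0,\ldots,m_s;F)$ the set of starting states is $S=V$ and the set of finishing states $F$ is a singleton, so to say that the puzzle can always be solved is to say that every vertex of $G(w,s,m_0,\ldots,m_s)$ lies in the connected component of the unique vertex of $F$, and this is equivalent to $G(w,s,m_0,\ldots,m_s)$ being connected.

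Given this equivalence, the theorem is a direct translation of Lemma~\ref{lem:connected}. When $w=1$, the lemma says the graph is connected, so the puzzle is always solvable; this is the first sentence of the theorem. When $w>1$, the lemma identifies precisely two circumstances under which the graph fails to be connected: $s=1$, and $s>1$ together with $\sum_{i=0}^s m_i < w+\max\{m_i:0\le i\le s\}$. These are exactly the two exceptional cases listed in the theorem, so in every remaining case the graph is connected and the puzzle is always solvable.

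The substantive work therefore sits entirely in Lemma~\ref{lem:connected}. For necessity I would expect a short obstruction argument: if $\sum m_i < w+m_j$ where $m_j=\max_i m_i$, then the bottom card of Pile~$j$ is immovable, because moving it passes through a state in which Pile~$j$ holds only that card and the other $w-1$ cards must fit into the remaining piles together with an empty slot for the destination of the move, requiring combined capacity at least $w$, whereas only $\sum m_i-m_j<w$ is available. For sufficiency I would proceed by induction on $w$: choose a card $c$ at the bottom of some non-empty pile $P_t$ of the target state, use the slack $\sum m_i-w\ge\max m_i$ to manoeuvre $c$ into the bottom of $P_t$ from any starting state, and apply the inductive hypothesis to the residual puzzle on $w-1$ cards with the capacity of $P_t$ reduced by one; the capacity inequality transfers cleanly to the reduced data, and in the boundary situation $m_{P_t}=1$ one chooses $P_t$ to have capacity at least~$2$ when possible, which an easy counting argument shows is always possible whenever the reduced problem would otherwise drop below two piles. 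The main obstacle I expect is the manoeuvre itself: emptying $P_t$, uncovering $c$ and seating $c$ at the bottom of $P_t$ must be sequenced so that no pile ever exceeds its capacity, and verifying this requires a careful bookkeeping argument that repeatedly exploits the $M$ units of global slack.
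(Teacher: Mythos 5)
Your proposal is correct and matches the paper exactly: the paper presents Theorem~\ref{thm:card} as an immediate consequence of Lemma~\ref{lem:connected} together with the observation that the puzzle is always solvable if and only if $G(w,s,m_0,\ldots,m_s)$ is connected, which is precisely your reduction. Your appended sketch of the lemma's proof (the capacity obstruction for necessity, induction on $w$ with one pile's capacity reduced for sufficiency) also follows the paper's actual argument for Lemma~\ref{lem:connected}.
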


But we are more interested in inglenook puzzles: we now show that Theorem~\ref{thm:inglenook} also follows from Lemma~\ref{lem:connected}.

\begin{proof}[Proof of Theorem~\ref{thm:inglenook}]
It is easy to see that $I(w,s,h,m_1,m_2,\ldots,m_s;S,F)$ can always be solved when there is only one wagon! So the theorem holds when $w=1$.

Now assume that $w>1$ and $s=1$. So we have only one siding. Since our inglenook puzzle is natural, all finishing positions have some wagon $\omega\in W$ as the last wagon in that siding. We may list the wagons by first listing those in the headshunt from shunting engine to the points, and then listing those in the siding from points to buffer stop. Shunting cannot change the order of the wagons in this list. In particular, the last wagon in the siding cannot be swapped for another. Since $w>1$ and our inglenook puzzle is natural, there is a starting position where the last wagon is not $\omega$, so the puzzle cannot always be solved. So the theorem holds when $s=1$.

Now assume that $w>1$, $s=2$ and $h=1$. We may list the wagons by firstly listing those in the first siding from buffer stop to points, then the wagon (if there is one) in the headshunt, and then listing those in the second siding from points to buffer stop. As before, shunting cannot change the order of the wagons in this list, so the puzzle cannot always be solved. So the theorem holds when $h=1$ and $s=2$.

We now assume that $w>1$ and $s>1$ (with no assumption on $h$). 
Suppose that
\begin{equation}
\label{eqn:infeasible}
h-1+\sum_{i=1}^sm_i<w+\max\{h-1,m_1,m_2,\ldots,m_s\}.
\end{equation}
Since our puzzle is natural, all finishing positions have the same wagon $\omega$ at the end of some siding, say Siding~$z$.

First, suppose that $\max\{h-1,m_1,m_2,\ldots,m_s\}=h-1$. Then~\eqref{eqn:infeasible} implies that $\sum_{i=1}^sm_i<w$. But in this case there are not enough spaces in the sidings for all of the wagons; in particular, all positions have a wagon next to the shunting engine, and this wagon can never be changed. So the puzzle cannot always be solved in this case, because there are starting positions with $\omega$ adjacent to the shunting engine, and this wagon cannot be moved into Siding~$z$. Second, suppose that $\max\{h-1,m_1,m_2,\ldots,m_s\}=m_j$ for some $j>0$. Then~\eqref{eqn:infeasible} implies that $h-1+\sum_{i\in\{1,2,\ldots,s\}\setminus \{j\}}m_i<w$, and so $h+\sum_{i\in\{1,2,\ldots,s\}\setminus \{j\}}m_i\leq w$. When this inequality is strict, so $h+\sum_{i\in\{1,2,\ldots,s\}\setminus \{j\}}m_i<w$, the wagon next to the buffer stop in Siding~$j$ cannot be moved, since any position after this wagon is transferred to the headshunt must have no wagons in Siding~$j$, but no such position can exist as there is not enough space in the other sidings and the headshunt to accommodate all wagons. Similarly, when $h+\sum_{i\in\{1,2,\ldots,s\}\setminus \{j\}}m_i=w$, this wagon cannot be transferred to another siding, because whenever it is moved into the headshunt all other sidings and the headshunt are full: the only option is to place the wagon back next to the buffer stop in Siding~$j$ as the next move. If $j=z$, the starting positions that do not have $\omega$ as the last wagon in Siding $j$ cannot be solved. If  $j\not=z$, the starting positions that have $\omega$ as the last wagon in Siding~$j$ cannot be solved. So in either case, the inglenook puzzle cannot always be solved.

To establish the theorem, we show that the puzzle can always be solved in the remaining cases. We may now assume that $w>1$, $s>1$ and $h-1+\sum_{i=1}^sm_i\geq w+\max\{h-1,m_1,m_2,\ldots,m_s\}$. If $h=1$, we assume that $s>2$. It is sufficient to show that the inglenook graph $\Gamma(w,s,h,m_1,m_2,\ldots,m_s)$ is connected.

First, assume that $h>1$.

We say that an inglenook position $\bx$ is \emph{convertible} if the headshunt is not full. So $\bx$ is convertible if and only if the number of wagons in the headshunt is at most $h-1$. We note that if $\bx$ is not convertible, then there is a move to a convertible position (indeed, all moves are to convertible positions, as one or more wagons must be moved from the headshunt to a siding with space before anything else can be done). In particular, in order to prove that $\Gamma(w,s,h,m_1,m_2,\ldots,m_s)$ is connected it is sufficient to show that all convertible positions lie in the same connected component.

Define $m_0=h-1$. Since we are assuming that $h>1$, we see that $m_0$ is positive. There is a one-to-one correspondence between convertible positions of the inglenook graph $\Gamma(w,s,h,m_1,m_2,\ldots,m_s)$ and states of the Cards in Piles graph $G(w,s,m_0,\ldots,m_s)$: the $w$ wagons correspond to the $w$ cards; the wagons in Siding~$i$ correspond to Pile~$i$ in the card puzzle with the wagon closest to the buffer stop being on the bottom of the pile; wagons in the headshunt correspond to Pile~0, with the wagon \emph{closest to the shunting engine} being on the bottom of the pile.

Lemma~\ref{lem:connected} shows that $G(w,s,m_0,\ldots,m_s)$ is connected, so there is a path from any state to any other. By definition, an edge in $G(w,s,m_0,\ldots,m_s)$ corresponds to a move in the Cards in Piles puzzle. Note that moving a card to or from Pile~0 corresponds to picking up or dropping off a single wagon from the corresponding siding in the associated (convertible) position in $\Gamma(w,s,h,m_1,m_2,\ldots,m_s)$. Also note that a move between any other pair of piles (Piles~$i$ and~$j$, say) corresponds to two moves in the inglenook puzzle: picking up a wagon from Siding~$i$ and then dropping it in Siding~$j$. Note that a convertible position never has a full headshunt, so these are valid moves in the inglenook puzzle. Therefore an edge in $G(w,s,m_0,\ldots,m_s)$ corresponds to a path of length one or two in $\Gamma(w,s,h,m_1,m_2,\ldots,m_s)$, with the corresponding convertible positions as end points. Since $G(w,s,m_0,\ldots,m_s)$ is connected, all the convertible positions in $\Gamma(w,s,h,m_1,m_2,\ldots,m_s)$ lie in the same connected component as required. Since all non-convertible positions are adjacent to convertible positions the inglenook graph is connected and so the theorem holds when $h>1$.

Finally, assume that $h=1$ and $s>2$. We proceed as we did when $h>1$, but observe that the convertible positions in the inglenook graph correspond to states in the Cards in Piles graph $G(w,s-1,m_1,\ldots,m_s)$ (one fewer pile, since the headshunt is always empty in a convertible position). By Lemma~\ref{lem:connected}, this graph is connected and so the inglenook puzzle can always be solved in this situation. So the theorem follows from Lemma~\ref{lem:connected}, as claimed.
\end{proof}

\section{The Cards in Piles graph}
\label{sec:card}

The aim of this section is to establish when the Cards in Piles graph is connected.

\begin{proof}[Proof of Lemma~\ref{lem:connected}]
It is easy to see that $G(w,s,m_0,\ldots,m_s)$ is always connected when $w=1$. So for the rest of this proof, we assume that $w\geq 2$.

We first check that the graph is disconnected in the cases listed in the statement of the lemma.

Suppose that $s=1$. If we read Pile~0 from bottom to top, and then Pile~1 from top to bottom, we get an ordering of the cards; any move does not change this order. Since $w>1$, there are states with different orderings, and so the graph $G(w,s,m_0,\ldots,m_s)$ is disconnected.

Suppose that $\sum_{i=0}^s m_i<w+\max\{m_i:0\leq i\leq s\}$. Let $j\in\{0,1,\ldots,s\}$ be chosen so that $m_j$ is as large as possible, so $\sum_{i=0}^s m_i<w+m_j$. This last inequality can be rewritten as
\[
\sum_{i\in\{0,1,\ldots,s\}\setminus\{j\}}m_i\leq w-1.
\]
But in this situation, we can never move the card at the bottom of Pile~$j$. (Suppose were able to reach a state where it could be moved. After this move, all $w$ cards must be distributed amongst the other piles, and so $w\leq \sum_{i\in\{0,1,\ldots,s\}\setminus\{j\}}m_i$. This contradicts the inequality above.) Since $w>1$, there are states with a different card at the bottom of Pile~$j$, and so the graph is disconnected.

Assume that $w>1$, $s>1$ and $\sum_{i=0}^s m_i\geq w+\max\{m_i:0\leq i\leq s\}$. To establish the lemma, we need to show that the graph is connected.

Suppose we are in a state where $w_i$ cards are in Pile~$i$. Then at most $m_i-w_i$ extra cards can be added to the pile: we say that Pile~$i$ has $m_i-w_i$ \emph{spaces} available. Note that the space of a given pile depends on the state, but the \emph{total space} across all piles does not. Indeed, in our situation
\begin{equation}
\label{eqn:space_eqn}
\sum_{i=0}^s(m_i-w_i)=\left(\sum_{i=0}^s m_i\right)-w\\
\geq \max\{m_i:0\leq i\leq s\}.
\end{equation}
In particular, the total space is positive, and if the total space is $1$ then $m_i=1$ for all $i$.

We argue that the graph is connected when the total space is $1$. In this situation, a state consists of $s$ piles with one card, and one pile with $0$ cards. A single move changes the position of a zero pile; three appropriately chosen moves, involving the zero pile, swaps any pair of cards we wish whilst keeping the rest of the cards unchanged. Using these moves (and the fact that any permutation can be written as a composition of transpositions) we see that the graph is connected.

Now suppose that the total space is at least $2$. We prove that the graph is connected by induction on the number $w$ of cards. When $w=1$, this is obvious. So we assume that $w>1$ and that the lemma holds for all smaller values of $w$. If $m_i=1$ for all $i$, the graph is connected by a similar argument to the total space $1$ case above. So we may assume that $m_k\geq 2$ for some $k\in\{0,1,\ldots,s\}$.

Let $\bx$ and $\by$ be two states. We need to show that there is a path in $G(w,s,m_0,\ldots,m_s)$ from $\bx$ to $\by$.

There is a state $\by'$ in the neighbourhood of $\by$ such that Pile~$k$ is not empty (starting at $\by$, move any card into Pile~$k$ from another pile if necessary). Let $b\in W$ be the card at the bottom of Pile~$k$ when in position~$\by'$.

It is sufficient to find a path in $G(w,s,m_0,\ldots,m_s)$ from $\bx$ to $\by'$. We claim that there is a path from $\bx$ to a state $\bz$ such that the card at the bottom of Pile~$k$ is equal to $b$ (and so agrees with the card at the bottom of Pile~$k$ in $\by'$). We will see how this is possible below. We then find a path from $\bz$ to $\by'$ by restricting our moves: we never move the bottom card in Pile~$k$. This is equivalent to playing an instance of the puzzle $C(w-1,s,m'_0,m'_1,\ldots,m'_s)$, where
\[
m'_i=\begin{cases} m_i-1&\text{ when }i=k\\
m_i&\text{ otherwise.}
\end{cases}
\]
(Note that we are using the fact that $m_k\geq 2$ here: this condition implies that $m'_k$ is positive.) By the inductive hypothesis, $G(w-1,s,m'_0,m'_1,\ldots,m'_s)$ is connected and so a path from $\bz$ to $\by'$ exists. So our lemma holds once our claim is proved.

We now establish our claim. If the bottom card in Pile~$k$ in $\bx$ is $b$, there is nothing to do. So we assume that $b$ does not lie at the bottom of Pile~$k$.

First, by starting at position $\bx$, we may find a path to a state $\bx'$ where $b$ lies in Pile~$k$. We see this as follows. Let $b$ lie in Pile~$\ell$. If $\ell=k$, there is nothing to do, so suppose that $\ell\not=k$. If Pile~$k$ has no space, we move a card from Pile~$k$ to some other pile. We then transfer the cards above $b$ from Pile~$\ell$ to other piles, filling spaces in Pile~$k$ last (if at all). Writing $w_i$ for the number of cards in Pile~$i$ at the start of this process, there are at most $w_\ell-1$ cards above $b$ in Pile~$\ell$, and the space available in other piles is, using~\eqref{eqn:space_eqn}, 
\begin{align}
\nonumber
\sum_{i\not=\ell} (m_i-w_i)&=\left(\sum_{i=0} ^sm_i\right)-w-(m_\ell-w_\ell)\\
\nonumber
&\geq \max\{m_i:0\leq i\leq s\}-m_\ell+w_\ell\\
\label{eqn:space}
&\geq w_\ell\\
&>w_{\ell}-1\nonumber
\end{align}
so this is possible, and there is still least one space in Pile~$k$ at the end of this process. Since $b$ now lies at the top of a pile, we may transfer $b$ to Pile~$k$ to form the state $\bx'$, as required. If $b$ is now the bottom card in Pile~$k$, we may take $\bz=\bx'$ and there is nothing more to do. So we may assume that $b$ is not on the bottom of Pile~$k$.

Second, starting at $\bx'$ and transferring a card to Pile~$k$ if necessary, we produce a state such that the total space in the remaining piles (in other words, those not equal to Pile~$k$) is at least $2$. If necessary, transferring a single card between these remaining piles produces a state $\bx''$ that has the additional property that two of these remaining piles (Pile~$u$ and Pile~$v$, say) have non-zero space. (We are using the fact that $s\geq 2$ here.)

Third, starting at state $\bx''$ we transfer all cards above $b$ from Pile~$k$ to other piles, in such a way that Pile~$u$ and Pile~$v$ still both have space. This is possible since, writing $w_i$ for the number of cards in Pile~$i$ in $\bx''$, there are at most $w_k-2$ cards above $b$ (as $b$ is not the bottom card in its pile) and since $\sum_{i\not=k} (m_i-w_i)\geq w_k$ by~\eqref{eqn:space}. We write $\bx'''$ for the state that results.

Fourth, starting at state $\bx'''$ we transfer $b$ to Pile~$u$, and then transfer the remaining cards from Pile~$k$. This is done so that $b$ remains on the top of a pile after this process is complete: transferring $b$ from Pile~$u$ to (the top of) Pile~$v$ if $v$ has one space remaining at any point ensures this property. Finally, once Pile~$k$ is empty we transfer $b$ to the bottom of Pile~$k$. This is the state $\bz$, as required, and our claim is established. The lemma then follows.
\end{proof}

\section{How many moves are needed?}
\label{sec:numberofmoves}

This section contains results about the maximum number of moves we need, provided the puzzle can always be solved. We begin by considering the Cards in Piles puzzle, and then apply these results to inglenook puzzles.

For a connected graph $G$, recall that the \emph{diameter} of $G$ is the largest distance between any pair of vertices. Write $f(w)$ for the largest diameter of the graph $G(w,s,m_0,\ldots,m_s)$,  where $s,m_0,\ldots,m_s$ are arbitrary subject to the graph being connected.

\begin{lemma}
\label{lem:connected_upper}
Define $f(w)$ as above. Then
\[
f(w)\leq w^2+6w-6.
\]
\end{lemma}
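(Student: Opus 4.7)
I plan to prove the bound by induction on $w$. The base case $w=1$ is immediate: two single-card states are either identical or joined by a single move, so $f(1)\leq 1=1^2+6\cdot 1-6$.

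For the inductive step, fix arbitrary states $\bx,\by$ in a Cards in Piles graph $G(w,s,m_0,\ldots,m_s)$ satisfying the space condition of Lemma~\ref{lem:connected}. Since $w\geq 2$, at least one pile of $\by$ is non-empty; choose such a pile $k$ and let $b$ be the card at the bottom of that pile in $\by$. Following the strategy of Lemma~\ref{lem:connected}, I would first reach, from $\bx$, a state $\bz$ in which $b$ lies at the bottom of pile~$k$ (the ``fix'' step, realised via Phases~A--D of that proof), and then invoke the induction hypothesis in the smaller graph $G(w-1,s,m_0',\ldots,m_s')$ obtained by freezing $b$ (so $m_k'=m_k-1$ and $m_i'=m_i$ otherwise). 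The space condition descends to the smaller puzzle, so the induction hypothesis yields a path from $\bz$ to $\by$ of length at most $(w-1)^2+6(w-1)-6$. The target bound follows from the recurrence $f(w)\leq f(w-1)+2w+5$, which unwinds to $f(w)\leq w^2+6w-6$, provided the fix step is implementable in at most $2w+5$ moves.

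The crux is bounding the fix step. I would walk through the four phases and count moves, splitting on whether $b$ is originally in pile~$k$. Phase~A (placing $b$ on top of pile~$k$) costs $0$ moves if $b$ is already in pile~$k$, and at most $w_\ell+1$ moves otherwise: one (if needed) to create space in pile~$k$, at most $w_\ell-1$ to unbury $b$ in its home pile~$\ell$, and one to move $b$ itself. Phase~B (arranging two non-$k$ piles $u,v$ with free space) costs at most $2$ moves. Phase~C (removing cards above $b$ in pile~$k$) costs at most $x_0+1$ moves, where $x_0$ is the number of cards originally above $b$ in pile~$k$ (zero when $\ell\neq k$, and otherwise at most $w_k-1$). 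Phase~D transfers $b$ to~$u$, empties the $y$ cards now sitting below $b$ in pile~$k$, and drops $b$ at the bottom of the now-empty pile~$k$, costing at most $2y+2$ moves after accounting for occasional juggles of $b$ between $u$ and $v$. Summing and using the disjointness identities $w_\ell+w_k\leq w$ together with $w_k\leq w-1$ (when $\ell\neq k$) or $x_0+y+1=w_k\leq w$ (when $\ell=k$) bounds the total by $2w+5$ in both cases.

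The main technical obstacle is controlling the juggling of $b$ in Phase~D, since a naive argument might move $b$ many times. The key invariant, inherited via inequality~\eqref{eqn:space} from the space condition $\sum_i m_i\geq w+\max_i m_i$, is that the aggregate free space in non-$k$ piles always exceeds the number of cards still sitting in pile~$k$. This guarantees that some non-$k$ pile always has room for the next card drawn from pile~$k$. The discipline of Lemma~\ref{lem:connected} --- transferring $b$ from $u$ to $v$ precisely when $v$ has one free slot remaining --- then keeps $b$ on top of some pile throughout, with at most one extra move of $b$ per card emptied out of pile~$k$, yielding the $2y+2$ bound. The degenerate regime where every $m_i=1$ (total free space exactly $1$) needs a short separate argument: here the puzzle reduces to sorting a permutation, each transposition costs three moves using the unique empty slot, so the diameter is at most $3(w-1)+1$, comfortably within $w^2+6w-6$. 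Assembling these pieces yields the recurrence and hence the claimed bound.
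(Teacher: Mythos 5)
Your overall strategy is exactly the paper's: extract from the proof of Lemma~\ref{lem:connected} the recurrence $f(w)\leq f(w-1)+2w+5$ (with the separate bound $3(w-1)+1$ when every pile has capacity one) and unwind it from $f(1)=1$. But there is a genuine gap in how you set up the inductive step. You take $k$ to be \emph{any} non-empty pile of $\by$ and freeze its bottom card $b$, passing to the puzzle with $m_k'=m_k-1$. If the pile you happen to pick has $m_k=1$, then $m_k'=0$ and pile~$k$ effectively disappears; when $s=2$ this leaves a two-pile puzzle on $w-1$ cards, which by Lemma~\ref{lem:connected} is \emph{disconnected} once $w-1\geq 2$, so the inductive hypothesis gives you no path from $\bz$ to $\by$ at all. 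This is not a contrived worry: in $G(4,2,3,3,1)$, which satisfies the space condition with equality, take $\by=((1,2),(3),(4))$ and choose $k=2$, $b=4$; with card~$4$ frozen in pile~$2$, the reading of the remaining three cards (pile~$0$ bottom to top, then pile~$1$ top to bottom) is invariant under all further moves, so a state $\bz$ whose reading is, say, $2,1,3$ can never reach $\by$ without unfreezing $b$. Note that $G(w,2,w-1,w-1,1)$, the very family used in Lemma~\ref{lem:connected_lower}, has this shape.

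The paper avoids the problem by choosing $k$ with $m_k\geq 2$ (which exists because the all-capacities-one case is treated separately), so the reduced puzzle keeps all $s+1$ piles and remains connected; the price is that pile~$k$ may be empty in $\by$, so one must route to a neighbour $\by'$ of $\by$ in which pile~$k$ is non-empty and append the single move $\by'\to\by$ at the end. You need to adopt this choice, and then re-examine your move budget: your $\ell\neq k$ case already uses the full $2w+5$ allowance for the fix step alone, so the extra move $\by'\to\by$ does not obviously fit, and the accounting there must be tightened (the paper simply asserts the $2w+5$ recurrence without displaying this bookkeeping). The remaining ingredients of your argument --- the descent of the space condition to the reduced puzzle, the $2y+2$ control of the Phase~D juggling via~\eqref{eqn:space}, the base case $f(1)=1$, and the $3(w-1)+1$ permutation argument when all $m_i=1$ --- match the paper's intent and are fine.
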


\begin{proof}
The proof of Lemma~\ref{lem:connected} shows that for $w>1$
\[
f(w)\leq \begin{cases}
3(w-1)+1&\text{ if the total space is $1$,}\\
2w+5+f(w-1)&\text{ otherwise.}
\end{cases}
\]
Since $f(1)=1$, we find that $f(w)\leq w^2+6w-6$ using induction on $w$. So the lemma follows.
\end{proof}

\begin{lemma}
\label{lem:connected_lower}
The diameter of $G(w,2,w-1,w-1,1)$ is at least $(w^2-1)/4$.
\end{lemma}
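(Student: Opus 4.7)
The plan is to exhibit two explicit states at graph-distance at least $\binom{w}{2}$ in $G(w,2,w-1,w-1,1)$; since $\binom{w}{2}-(w^2-1)/4=(w-1)^2/4\geq 0$, this suffices. I would take $\bx$ to be the state with $P_0=(1,2,\ldots,w-1)$ read bottom-to-top, $P_1$ empty and $P_2=(w)$, and $\by$ to be the state with $P_0=(w,w-1,\ldots,2)$ read bottom-to-top, $P_1$ empty and $P_2=(1)$. Both are clearly valid states.

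Next I would introduce a canonical linear ordering of the $w$ cards induced by any state $\bz=(P_0,P_1,P_2)$: list $P_0$ from bottom to top, then the (at most one) card in $P_2$, then $P_1$ from top to bottom. Write $\pi(\bz)$ for the resulting permutation of the $w$ cards. Under this convention $\pi(\bx)=(1,2,\ldots,w)$ is the identity and $\pi(\by)=(w,w-1,\ldots,1)$ is the reverse, so the number of adjacent transpositions needed to convert one into the other is exactly $\binom{w}{2}$.

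The key step, which I expect to be the main technical obstacle, is to verify the following invariant: a single move changes $\pi$ by at most one adjacent transposition. I would check this by running through the six move types. The ``$P_2$-slot'' sits between the top of $P_0$ and the top of $P_1$ in the reading, so any move that loads or unloads $P_2$ simply shifts the card in question between adjacent roles (the top of a pile versus the $P_2$-slot) without crossing any other card, leaving $\pi$ fixed. A move between $P_0$ and $P_1$ while $P_2$ is empty similarly leaves $\pi$ fixed, since the two pile-tops are then adjacent in the reading. The only case that alters $\pi$ is a move between $P_0$ and $P_1$ while $P_2$ contains some card $z$: the moved card is adjacent to $z$ in the reading, and the move swaps them -- a single adjacent transposition. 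The checks are routine but need to be done with care, particularly in the edge cases where one of the piles is empty before or after the move.

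Given this invariant, the conclusion is immediate: the inversion count of $\pi$ can change by at most $1$ per move, so passing from $\bx$ to $\by$ requires at least $\binom{w}{2}=w(w-1)/2$ moves. As observed in the first paragraph, $w(w-1)/2\geq (w^2-1)/4$ for every $w\geq 1$, so the diameter of $G(w,2,w-1,w-1,1)$ is at least $(w^2-1)/4$, establishing the lemma.
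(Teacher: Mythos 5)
Your proof is correct, and it takes a genuinely different (and in fact stronger) route than the paper's. The paper restricts attention to the set $H$ of states with Pile~2 empty, maps each such state to an ordering of the cards (Pile~0 bottom-to-top, then Pile~1 top-to-bottom), puts a footrule-type metric $d(\underline{x},\underline{y})=\sum_\omega|i-j|$ on these orderings, and then bounds the length of each excursion of the path between consecutive visits to $H$ from below by half the change in $d$; summing over excursions and evaluating $d$ on the reversal gives $(w^2-1)/4$. You instead define a potential on \emph{all} states by inserting the Pile-2 slot between the two pile tops in the reading order, and verify the clean invariant that every single move changes this reading by at most one adjacent transposition (only a $P_0$--$P_1$ transfer with $P_2$ occupied changes it at all). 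Your case analysis is right: in each loading/unloading move the moved card keeps its position in the reading, and in the one nontrivial case it swaps with the card sitting in $P_2$. The inversion count then changes by at most $1$ per move, so the identity and the reversal are at distance at least $\binom{w}{2}=w(w-1)/2$, which exceeds $(w^2-1)/4$ by $(w-1)^2/4$. What your approach buys is a simpler global monovariant argument with no need to decompose the path into segments, and a lower bound roughly twice the paper's; what the paper's approach buys is essentially nothing extra here, so your version is a genuine improvement on the constant (though both give the same $\Theta(w^2)$ order, which is all that Theorem~\ref{thm:inglenook_moves} uses).
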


\begin{proof}
Let $X$ be the set of all sequences $\underline{x}=(x_1,x_2,\ldots,x_w)$ over $W$ of length~$w$ with all entries distinct, so $X$ may be thought of as the set of all orderings of the elements of $W$. Let $\underline{x},\underline{y}\in X$ be sequences with $\underline{x}=(x_1,x_2,\ldots,x_w)$ and $\underline{y}=(y_1,y_2,\ldots,y_w)$. For a card $\omega\in W$, there exist unique $i,j\in\{1,2,\ldots,w\}$ such that $x_i=y_j=\omega$. We write $d_\omega(\underline{x},\underline{y})=|i-j|$, and we define the \emph{distance} from $\underline{x}$ to $\underline{y}$ to be
\[
d(\underline{x},\underline{y})=\sum_{\omega\in W} d_\omega(\underline{x},\underline{y}).
\]
It is not hard to see that this notion of distance is a metric on $X$.

Consider the subset $H$ of states in $G(w,2,w-1,w-1,1)$ with Pile~$2$ empty. Define a map $\phi:H\rightarrow X$ by, for $\bz\in H$, setting $\phi(\bz)$ to be the ordering of $W$ determined by the cards in Piles~$0$ and~$1$, ordering Pile~$0$ from bottom to top, and then Pile~$1$ from top to bottom.

Choose a state $\bz\in H$, and write $\phi(\bz)=(x_1,x_2,\ldots,x_w)$. Let $\bz'\in H$ be chosen so that $\phi(\bz')=(x_w,x_{w-1},\ldots,x_1)$ (so the ordering of cards is reversed compared to  $\bz\in H$). We have that 
\begin{align*}
d(\phi(\bz),\phi(\bz'))&=\sum_{i=1}^w |w+1-2i|\\
&= \begin{cases}
w^2/2&\text{ when $w$ is even}\\
(w^2-1)/2&\text{ when $w$ is odd}
\end{cases}\\
&\geq (w^2-1)/2.
\end{align*}

Fix a shortest path from $\bz$ to $\bz'$, and let $\bz_0,\bz_1,\ldots,\bz_\ell$ be the elements on this path that lie in $H$, in order. So $\bz_0=\bz$ and $\bz_\ell=\bz'$. Consider the segment of the path between $\bz_j$ and $\bz_{j+1}$ for some fixed integer $j$. Let $\underline{x}=\phi(\bz_j)$ and $\underline{y}=\phi(\bz_{j+1})$.

We claim that the length of the segment of the path from $\bz_j$ to $\bz_{j+1}$ is at least $d(\underline{x},\underline{y})/2$.  This is sufficient to prove the lemma, since then the length of the path is at least
\[
\tfrac{1}{2}\sum_{j=0}^{\ell-1}d(\phi(\bz_j),\phi(\bz_{j+1}))\geq \tfrac{1}{2}d(\phi(\bz_0),\phi(\bz_{\ell}))=\tfrac{1}{2}d(\phi(\bz),\phi(\bz'))\geq (w^2-1)/4.
\]
Since the path was as short as possible, the diameter of the graph is at least $(w^2-1)/4$, as required.

We now aim to prove our claim. 
If $\bz_j$ and $\bz_{j+1}$ are consecutive elements in the path, the move from $\bz_j$ to $\bz_{j+1}$ transfers cards between Piles~$0$ and~$1$ and so $\underline{x}=\underline{y}$. In particular, $d(\underline{x},\underline{y})=0$ and so the claim follows trivially in this case. Now suppose that $\bz_j$ and $\bz_{j+1}$ are not consecutive. The segment of the path from $\bz_j$ to $\bz_{j+1}$ begins and ends with a transfer to and from Pile~$2$, but the remaining moves transfer cards between Piles~$0$ and $1$, and so do not affect the order of the cards in those piles. The net result of this process is that one card is removed from the sequence $\underline{x}$ and placed in a different position to form $\underline{y}$. So there exist $u,v\in\{1,2,\ldots,w\}$ such that $u<v$ and either
\[
y_i=\begin{cases} x_i&\text{ when }i<u\text{ or }i>v\\
x_{i+1}&\text{ when }u\leq i<v\\
x_u&\text{ when }i=v
\end{cases}
\]
or
\[
y_i=\begin{cases} x_i&\text{ when }i<u\text{ or }i>v\\
x_v&\text{ when }i=u\\
x_{i-1}&\text{ when }u\leq i<v.
\end{cases}
\]
In either case, we see that $d(\underline{x},\underline{y})=2(v-u)$, since one card moves $v-u$ positions in the ordering, and $v-u$ cards move $1$ position in the ordering. But there are at least $(v-u)$ edges in the segment of the path between $\bz_j$ and $\bz_{j+1}$ in $G(w,2,w-1,w-1,1)$. To see this, first suppose $x_u$ is the card that is moved to Pile~$2$ as the first move in the segment. Then $x_u$ is at the top of either Pile~$0$ or Pile~$1$ in $\bz_j$ (otherwise it cannot be moved) and so there are $u-1$ cards in Pile~$0$ after the first move. Similarly, there must be $v-1$ cards in Pile~$0$ before the last move in the segment. So there must be at least $(v-1)-(u-1)=v-u=\frac{1}{2}d(\underline{x},\underline{y})$ moves in the segment that transfer a card between Piles~$0$ and~$1$, and our claim follows in this case. Similarly if $x_v$ is the card that is moved at the start of the segment between $\bz_j$ and $\bz_{j+1}$, there are $v-1$ cards in Pile~$0$ after this move, and $u-1$ cards in Pile~$0$ before the last move in the segment. So there are at least $(v-1)-(u-1)=v-u=\frac{1}{2}d(\underline{x},\underline{y})$ moves in the segment in this case also, and our claim follows.
\end{proof}

The following corollary follows immediately from Lemmas~\ref{lem:connected_upper} and~\ref{lem:connected_lower}:

\begin{corollary}
\label{cor:cards_in_piles_moves}
When it can be always be solved, the Cards in Piles puzzle with~$w$ cards can be solved using at most $w^2+6w-6$ moves. There are puzzles using $w$ cards that require at least $(w^2-1)/4$ moves to solve.
\end{corollary}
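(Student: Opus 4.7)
The corollary consists of two claims: a quadratic upper bound on the number of moves needed for any solvable Cards in Piles puzzle with $w$ cards, and a matching quadratic lower bound witnessed by a specific family of puzzles. The plan is simply to read each claim off the preceding lemmas, taking care that the quantities match up.

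For the upper bound, recall that $f(w)$ was defined as the largest diameter of any connected graph $G(w,s,m_0,\ldots,m_s)$. Any Cards in Piles puzzle $C(w,s,m_0,\ldots,m_s;F)$ that can always be solved is, by Theorem~\ref{thm:card}, played on a connected graph, and the number of moves needed from a worst-case starting state is at most the diameter of that graph. Hence it is at most $f(w) \leq w^2 + 6w - 6$ by Lemma~\ref{lem:connected_upper}. This immediately yields the first half of the corollary.

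For the lower bound I want to exhibit a concrete solvable puzzle whose worst-case solution length is at least $(w^2-1)/4$. Lemma~\ref{lem:connected_lower} shows that the graph $G(w,2,w-1,w-1,1)$ has diameter at least $(w^2-1)/4$. Two small checks are needed: first, that this graph is actually connected (so a corresponding puzzle really is always solvable); second, that a diameter lower bound translates into a concrete puzzle needing that many moves. The first is a direct application of Lemma~\ref{lem:connected}: the hypothesis $\sum_{i=0}^{s} m_i \geq w + \max_i m_i$ becomes $(w-1)+(w-1)+1 \geq w+(w-1)$, i.e.\ $2w-1 \geq 2w-1$, which holds (with equality). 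The second is by definition of diameter: pick two states $\bz,\bz'$ realising the diameter and consider the puzzle instance with finishing set $F=\{\bz'\}$ starting from $\bz$; every solution path in the graph has length at least $(w^2-1)/4$.

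The ``main obstacle'' is really only bookkeeping---verifying that $G(w,2,w-1,w-1,1)$ meets the hypothesis of Lemma~\ref{lem:connected} in its tightest (equality) case, and that a graph-diameter bound passes to a worst-case move-count lower bound for some concrete puzzle. Both are immediate, so the corollary is essentially an assembly of the two preceding lemmas with no further substantive content.
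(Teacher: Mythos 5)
Your proposal is correct and matches the paper, which gives no explicit proof and simply states that the corollary follows immediately from Lemmas~\ref{lem:connected_upper} and~\ref{lem:connected_lower}. Your added checks (that $G(w,2,w-1,w-1,1)$ satisfies the connectivity criterion of Lemma~\ref{lem:connected} with equality, and that a diameter bound yields a worst-case move count) are exactly the routine verifications the paper leaves implicit.
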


\begin{theorem}
\label{thm:inglenook_moves}
A natural inglenook puzzle with $w$ wagons can be solved in at most $2w^2+12w-10$ moves, provided the puzzle can always be solved. There are starting positions for natural inglenook puzzles that require at least $(w^2-1)/2$ moves to solve.
\end{theorem}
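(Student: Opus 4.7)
The plan is to reduce both bounds to Corollary~\ref{cor:cards_in_piles_moves} via the Cards in Piles correspondence established in the proof of Theorem~\ref{thm:inglenook}. For the upper bound, starting from any position $\bx$, I would first take at most one inglenook move to reach a convertible position $\bx'$, since every neighbour of a non-convertible position is convertible; symmetrically, I would choose a convertible position $\by'$ at distance at most one from some finishing position $\by \in F$. Both $\bx'$ and $\by'$ correspond to states of the associated Cards in Piles graph, which by Corollary~\ref{cor:cards_in_piles_moves} has diameter at most $w^2+6w-6$. Since each Cards in Piles edge lifts to at most two inglenook moves (pickup then dropoff), the total length is bounded by $1+2(w^2+6w-6)+1 = 2w^2+12w-10$.

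For the lower bound, I would exhibit a specific natural puzzle where the $2{:}1$ ratio between inglenook and Cards in Piles moves is exact. Concretely, take $h=1$, $s=3$, $m_1=m_2=w-1$ and $m_3=1$. This satisfies~\eqref{eqn:main_condition} with equality (since $h-1+m_1+m_2+m_3 = 2w-1 = w+(w-1)$), so Theorem~\ref{thm:inglenook} guarantees the puzzle can always be solved. The convertible inglenook positions (those with empty headshunt) biject with the states of $G(w,2,w-1,w-1,1)$, which by Lemma~\ref{lem:connected_lower} has diameter at least $(w^2-1)/4$. I would take $\bz$ and $\bz'$ to be the two convertible inglenook positions corresponding to the extremal pair of Cards in Piles states from that lemma (both with Siding~$3$ empty and reversed $\phi$-orderings), and define $F=\{\bz'\}$ together with $S$ equal to the set of all positions having the same pile sizes as $\bz$. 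Then $S$ is closed under permutations of wagons, and $\bz'$ has a fixed wagon at the buffer-stop end of a fixed siding, so the resulting inglenook puzzle is natural.

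The key observation that makes this sharp, and what I see as the main obstacle, is arguing that a shortest inglenook path cannot beat the doubling by somehow exploiting non-convertible intermediate positions. The decisive point is that when $h=1$, every inglenook move either fills the (empty) headshunt from a siding or empties the (full) headshunt into a siding, so the inglenook graph is bipartite between convertible and non-convertible positions. Any path between two convertible positions therefore has even length, and consecutive pairs of moves correspond bijectively to single Cards in Piles edges. Hence the inglenook distance from $\bz$ to $\bz'$ is exactly twice their Cards in Piles distance, yielding the required $2 \cdot (w^2-1)/4 = (w^2-1)/2$.
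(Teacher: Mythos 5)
Your proposal is correct and follows essentially the same route as the paper: the upper bound via ``one move to a convertible position, then at most two inglenook moves per Cards in Piles edge, then one move at the end,'' and the lower bound via the puzzle $\Gamma(w,3,1,w-1,w-1,1)$ whose convertible positions biject with $G(w,2,w-1,w-1,1)$, invoking Lemma~\ref{lem:connected_lower}. Your extra observations (that the $h=1$ inglenook graph is bipartite between convertible and non-convertible positions, and the explicit check that $S$ and $F$ are natural) are correct refinements of details the paper treats more briefly, but the argument is the same.
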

\begin{proof}
 Each position in an inglenook graph has a convertible position in its neighbourhood, and an edge in the associated Cards in Piles graph corresponds to at most $2$ moves in the inglenook graph. So  the diameter of the inglenook graph is at most $2f(w)+2$. Thus (provided the puzzle is always solvable) a natural inglenook puzzle can be solved in at most $2w^2+12w-10$ moves, by Lemma~\ref{lem:connected_upper}.
 
Lemma~\ref{lem:connected_lower} shows that there are states $\bz,\bz'$ in the Cards in Piles graph $G(w,2,w-1,w-1,1)$ that have no paths of length less than $(w^2-1)/4$ between them. Consider an inglenook puzzle with three sidings of lengths $w-1$, $w-1$ and $1$ respectively, and a headshunt of length $1$. Let $\bx$ and $\bx'$ be the convertible positions in the inglenook graph $\Gamma(w,3,1,w-1,w-1,1)$ that correspond to $\bz,\bz'\in G(w,2,w-1,w-1,1)$. Let $F=\{\bx'\}$ and $S=\Gamma(w,3,1,w-1,w-1,1)$. It is sufficient to show that the inglenook puzzle $I(w,3,1,w-1,w-1;S,F)$ requires at least $(3w^2+w)/2$ moves to solve, when we are given the starting position $\bx\in S$. To see this, note that (because the headshunt can contain only one wagon) there is a one-to-one correspondence between edges in the Cards in Piles graph and pairs of consecutive moves starting (and finishing) at convertible positions in the inglenook graph. Thus paths between convertible positions in $\Gamma(w,3,1,w-1,w-1,1)$ are twice the length of the corresponding paths in $G(w,2,w-1,w-1,1)$. If we could solve the inglenook puzzle in fewer than $(w^2-1)/2$ moves, there is a path of length less than  $(w^2-1)/2$ between positions $\bx$ and $\bx'$ in $\Gamma(w,3,1,w-1,w-1,1)$: the corresponding path between states $\bz$ and $\bz'$ in $G(w,2,w-1,w-1,1)$ would have length less than $(w^2-1)/4$, contradicting our choice of $\bz,\bz'$. So we cannot solve the inglenook puzzle in fewer than $(w^2-1)/2$ moves, and the theorem follows.
\end{proof}

It would be very interesting to give more precise results on the maximum number of moves needed for an inglenook puzzle. For the classic Inglenook Sidings puzzle, the above argument shows that at most $214$ moves are needed, but we can actually get by with just $20$ moves (even if we require all wagons to be in a fixed order). So there is scope for improvement. However, the argument above does show that this strategy of `working from the back wagons forwards' is reasonably efficient, even though there is no reason for it to be optimal. Are there better general methods for solving a natural inglenook puzzle? In particular, what is the smallest value $c$ such that we can always solve the puzzle in $cw^2+o(w^2)$ moves?

\end{document}